\newtheorem{theorem}{Theorem}[section]
\newtheorem{cor}[theorem]{Corollary}
\newtheorem{lem}[theorem]{Lemma}
\theoremstyle{definition}
\newtheorem{defn}[theorem]{Definitions}
\theoremstyle{remark}
\newtheorem{remark}[theorem]{Remark}
\newtheorem{conjecture}[theorem]{Conjecture}
\numberwithin{equation}{section}
\definecolor{red}{rgb}{1.0, 0.0, 0.0}
\newcommand{\Fq} {{\mathbb F}_q}
\newcommand{\F} {\mathbb F}
\title[$\Fq$-primitive points on varieties over finite fields]
{$\Fq$-primitive points on varieties over finite fields}
\author{Soniya Takshak}
\address{Soniya Takshak \endgraf
	Department of Mathematics \endgraf
	Indian Institute of Technology Delhi \endgraf
	New Delhi, 110016, India} 
\email{sntakshak9557@gmail.com}
\author{Giorgos Kapetanakis}
\address{Giorgos Kapetanakis \endgraf
	Department of Mathematics \endgraf
	University of Thessaly \endgraf
	3rd km Old National Road
	Lamia-Athens, 35100 Lamia, Greece,} 
\email{gnkapet@gmail.com}
\author{Rajendra Kumar Sharma}
\address{Rajendra Kumar Sharma \endgraf
	Department of Mathematics \endgraf
	Indian Institute of Technology Delhi \endgraf
	New Delhi, 110016, India} 
\email{rksharmaiitd@gmail.com}
\keywords{Finite Field, Primitive Element, $r$-Primitive Element, Character} \subjclass[2020]{12E20, 11T23}
\date{\today}
\begin{document}
	\allowdisplaybreaks

	\begin{abstract} 
		
		Let $r$ be a positive divisor of $q-1$ and $f(x,y)$ a rational function of degree sum $d$ over $\Fq$ with some restrictions, where the degree sum of a rational function $f(x,y) = f_1(x,y)/f_2(x,y)$ is the sum of the degrees of $f_1(x,y)$ and $f_2(x,y)$. In this article, we discuss the existence of triples $(\alpha, \beta, f(\alpha, \beta))$ over $\Fq$, where $\alpha, \beta$ are primitive and $f(\alpha, \beta)$ is an $r$-primitive element of $\Fq$. In particular, this implies the existence of $\Fq$-primitive points on the surfaces of the form $z^r = f(x,y)$. As an example, we apply our results on the unit sphere over $\Fq$.  
			
	\end{abstract}
	\maketitle
	\section{Introduction}
	Let $\Fq$ denote a finite field with $q$ elements. An element $\alpha$ in $\Fq$ is called primitive if it generates the cyclic group of non-zero elements of $\Fq$, denoted by $\Fq^*$. Clearly, the order of primitive elements is $q-1$. There are several applications of primitive elements in cryptography \cite{cryptographically-strong-sequences, note-on-discrete-logarithms, Paar2010}. Primitive elements are generalized as $r$-primitive elements for a positive divisor $r$ of $q-1$. More precisely, an element $\alpha \in \Fq$ is called $r$-primitive if its order is $\frac{q-1}{r}$. Although primitive elements are elusive, there are, in some cases, effective ways to find $r$-primitive elements \cite{Elements-of-high-order, Constructing-FF-LargeOrderElements, Constructing-high-order-elements}. Many articles in the literature \cite{booker2018primitive, s.d, WANG2012800, SoniyaTakshak} deal with the existence of primitive pairs $(\alpha, f(\alpha))$ over $\Fq$, where $f(x)$ is a non-exceptional rational function in $\Fq(x)$. This is equivalent to the existence of $\Fq$-primitive points on the curves $y=f(x)$ over $\Fq$.
	Note that, the \textquote{$\Fq$-primitivity} on curves is not related to the \textquote{$\Fq$-primitivity} on elliptic curves defined in earlier works. More precisely, in \cite{Primitive-points-on-elliptic-curves}, Lang and Trotter called a point to be $\Fq$-primitive point on an elliptic curve if it generates the elliptic curve with respect to the elliptic curve group operation.
	In 2022, Cohen, Kapetanakis and Reis \cite{The-existence-of-Fq-primitive-points-on-curves-using-freeness} generalized the notion of freeness and proved the existence of $\Fq$-primitive points on the curves of the form $y^n = f(x)$ over $\Fq$.
	In this article, we extend the notion of exceptional functions from one variable to the functions of $n$-variables and call them \emph{primarily exceptional} functions. Then  we obtain a sufficient condition for the existence of triples of the form $(\alpha, \beta, f(\alpha,\beta))$ over $\Fq$, where $\alpha, \beta$ are primitive and $f(\alpha, \beta)$ is $r$-primitive for a primarily non-exceptional rational function of degree sum $d$ over $\Fq$. In other words, we discuss the existence of $\Fq$-primitive points on the surfaces of the form $z^r= f(x,y)$, where $r|q-1$ and $f(x,y)$ is a primarily non-exceptional rational function over $\Fq$.
	
	\section{Preliminaries}
	Let us recall some basic concepts crucial for our proof. For positive integers $a$ and $b$, set $a_{(b)} = \frac{a}{\text{gcd}(a,b)}$. For a group $G$, a character $\chi$ is a homomorphism from $G$ to the multiplicative group of complex numbers with unit modulus. A character $\chi_1$ is called the trivial character of $G$ when $\chi_1(g) = 1$ for all $g \in G$. Two types of characters can be defined over a finite field $\Fq$ corresponding to the operations of addition and multiplication, namely additive and multiplicative characters, respectively. A multiplicative character $\chi$ can be extended from $\Fq^*$ to $\Fq$ by the following rule
	$$ \chi(0) \coloneqq \begin{cases}
		0,~~~ \text{if}~~~ \chi \ne \chi_1,\\
		1,~~~ \text{if}~~~ \chi = \chi_1.
	\end{cases}$$
	\noindent
	The order of a character $\chi$ is defined to be the smallest positive integer $s$ such that $\chi^s = \chi_1$. We denote the set of multiplicative characters of $\F_{q}$ by $\widehat{\F_{q}}$.
	
	An element $\alpha \in \Fq$ is called $e$-free for $e|q-1$, if $\alpha$ can not be written as $\beta^d$ for any $\beta \in \Fq$ and $d|e$ with $d > 1$. Clearly, $\alpha$ is primitive when $\alpha$ is $(q-1)$-free. Following Cohen and Huczynska \cite{cohen.h, cohen.s}, the characteristic function for the set of $e$-free elements of ${\mathbb{F}_{q}^{*}}$, where $e|q-1$, is given by
	$$\rho_e:
	\alpha \mapsto \theta(e) \sum\limits_{s|e} \frac{\mu(s)}{\phi(s)} \sum\limits_{\chi_s}\chi_s(\alpha),$$
	where $\theta(e) := \frac{\phi(e)}{e},$ $\mu$ denotes the M\"obius function, and $\chi_s$ is a multiplicative character of order $s$ in $\widehat{\F_{q}}$.
	
	For a divisor $r$ of $q-1$ and a divisor $R$ of $\frac{q-1}{r}$, let $C_r$ denote the multiplicative subgroup of $\F_{q}^*$ of order $\frac{q-1}{r}$, an element $\alpha \in \F_{q}^*$ is called $(R, r)$-free if $\alpha \in C_r$ and $\alpha$ is $R$-free in $C_r$, i.e., if $\alpha$ can not be written as $\alpha = \beta^s$ for any $\beta \in C_r$ and $s|R$, with $s > 1$. Clearly, $\alpha$ is $r$-primitive when $\alpha$ is $(\frac{q-1}{r}, r)$-free. The characteristic function for $r$-primitive elements was first expressed in a useful way by Cohen and Kapetanakis \cite{line-or-translate-property}. Later, Cohen et al. \cite{The-existence-of-Fq-primitive-points-on-curves-using-freeness} provided an improved version of the characteristic function as follows. 
	$$ \mathbb{I}_{R,r}: \alpha \mapsto \frac{\theta(R)}{r}\sum_{d|Rr}^{} \frac{\mu(d_{(r)})}{\phi(d_{(r)})}\sum_{\chi_d}^{} \chi_d(\alpha),$$	
	where $\theta(R) = \frac{\phi(R)}{R},$ and $\chi_d$ is a	multiplicative character of $\Fq^*$ of order $d$. 
	
	To bound certain character sum that will arise, we will use the following result.
	
	\begin{lem}\emph{\cite{The-existence-of-Fq-primitive-points-on-curves-using-freeness}}
		For any positive integers $R, r$, we have that
		$$ \sum_{d|R}^{} \frac{|\mu(d_{(r)})|}{\phi(d_{(r)})}\cdot \phi(d) = \text{gcd}(R, r) \cdot W(\text{gcd}(R, R_{(r)})).$$
	\end{lem}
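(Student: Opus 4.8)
The plan is to prove the identity by reducing it, via multiplicativity, to the case $R=p^{a}$ a prime power. Here $W(m)$ denotes the number of squarefree divisors of $m$ (so $W(1)=1$ and $W(p^{k})=2$ for $k\ge 1$), and recall $a_{(b)}=a/\gcd(a,b)$. I would fix $r$ and treat both sides as arithmetic functions of the single variable $R$.

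The first step is to establish that the left-hand side is multiplicative in $R$. Writing it as $\sum_{d\mid R}h(d)$ with $h(d)=\dfrac{|\mu(d_{(r)})|}{\phi(d_{(r)})}\,\phi(d)$, it suffices to show that $h$ is multiplicative, since then so is its summatory function. The point needing care is the map $d\mapsto d_{(r)}$: for coprime $d_{1},d_{2}$ one checks, by comparing $p$-adic valuations (for each prime $p$ at most one of $v_{p}(d_{1}),v_{p}(d_{2})$ is nonzero), that $\gcd(d_{1}d_{2},r)=\gcd(d_{1},r)\gcd(d_{2},r)$, hence $(d_{1}d_{2})_{(r)}=(d_{1})_{(r)}(d_{2})_{(r)}$ with the two factors coprime (each $(d_{i})_{(r)}$ divides $d_{i}$); multiplicativity of $\mu$ and $\phi$ then gives $h(d_{1}d_{2})=h(d_{1})h(d_{2})$. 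Then I would note that the right-hand side is also multiplicative in $R$: the map $R\mapsto\gcd(R,r)$ is multiplicative, hence so is $R\mapsto R_{(r)}$, hence so is $R\mapsto\gcd(R,R_{(r)})$, and $W$ is multiplicative.

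With both sides multiplicative in $R$, it remains to verify the identity when $R=p^{a}$. Writing $b=v_{p}(r)$, for a divisor $d=p^{j}$ of $p^{a}$ one has $d_{(r)}=p^{\max(j-b,0)}$, so the summand contributes $\phi(p^{j})$ for $j\le b$, contributes $\phi(p^{b+1})/(p-1)=p^{b}$ for $j=b+1$, and contributes $0$ for $j\ge b+2$. Using the telescoping identity $\sum_{0\le j\le m}\phi(p^{j})=p^{m}$, the left-hand side becomes $p^{a}$ when $a\le b$ and $p^{b}+p^{b}=2p^{b}$ when $a\ge b+1$. On the right-hand side, $\gcd(p^{a},r)=p^{\min(a,b)}$ and $(p^{a})_{(r)}=p^{a-\min(a,b)}$, so $\gcd\bigl(p^{a},(p^{a})_{(r)}\bigr)$ equals $1$ when $a\le b$ and is a positive power of $p$ when $a\ge b+1$; thus the right-hand side is $p^{a}W(1)=p^{a}$, respectively $p^{b}W(p^{a-b})=2p^{b}$, matching the left-hand side in both cases.

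I expect the only genuine obstacle to be the multiplicativity reduction — specifically, confirming that the twisted quantity $d_{(r)}$ behaves multiplicatively across coprime parts of $d$, which hinges on the valuation identity $\gcd(d_{1}d_{2},r)=\gcd(d_{1},r)\gcd(d_{2},r)$ for coprime $d_{1},d_{2}$. Everything after that is a short prime-power bookkeeping exercise, the only point of attention being the boundary term $j=b+1$.
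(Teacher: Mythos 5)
Your proof is correct: the reduction to prime powers via multiplicativity of $d\mapsto d_{(r)}$ on coprime arguments, followed by the three-case bookkeeping at $j\le b$, $j=b+1$, $j\ge b+2$, checks out, and both sides agree ($p^a$ versus $2p^b$) in the two regimes. The paper itself quotes this lemma from \cite{The-existence-of-Fq-primitive-points-on-curves-using-freeness} without reproducing a proof, and your multiplicativity-plus-prime-power argument is essentially the standard one given in that source, so there is nothing further to compare.
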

	\noindent
	The following result will help us prove the sufficient condition in Theorem \ref{sufficient_condition}. 
	
	\begin{lem}\emph{\cite{Stepnov}}\label{Stepnov}
		Let $F(x) \in \mathbb{F}_q(x)$ be a rational function. Write $F(x)=\prod_{j=1}^{k}F_j(x)^{r_j}$, where $F_j(x) \in \mathbb{F}_{q}[x]$ are irreducible polynomials and $r_j$ are nonzero integers. Let $\chi$ be a multiplicative character of $\F_{q}$ of order $d$ (a divisor of $q-1$). Suppose that $F(x)$ is not of the form $cG(x)^d$ for any rational function $G(x) \in \mathbb{F}_q(x)$ and $c \in \mathbb{F}_q*$. Then we have 
		$$\left|\sum\limits_{\alpha \in \F_{q},F(\alpha)\neq \infty} \chi(F(\alpha))\right| \leq \left( \sum\limits_{j=1}^{k}deg(F_j) -1\right)q^{1/2}.$$
	\end{lem}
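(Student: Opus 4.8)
The plan is to deduce the bound from the classical Weil estimate for multiplicative character sums of a \emph{polynomial}, after first replacing each exponent $r_j$ by its residue modulo $d$. We may assume $d\ge 2$, for if $d=1$ then $\chi$ is trivial and every rational function has the excluded form $cG(x)^{d}$, so the hypothesis cannot hold. Write $r_j=dq_j+s_j$ with $0\le s_j\le d-1$, set $J=\{\,j: s_j\neq 0\,\}$, and put
$$F^{*}(x)=\prod_{j\in J}F_j(x)^{s_j}\in\Fq[x].$$
By unique factorisation in $\Fq[x]$, one has $F=cG(x)^{d}$ for some $G\in\Fq(x)$ and $c\in\Fq^{*}$ exactly when $d\mid r_j$ for all $j$; hence the hypothesis is equivalent to $J\neq\emptyset$, so $F^{*}$ is a non-constant polynomial, and since all of its exponents $s_j$ satisfy $1\le s_j\le d-1$, it is not a constant times a $d$-th power of a polynomial. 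Finally, if $\alpha\in\Fq$ is a root of none of the $F_j$, then $F(\alpha)\in\Fq^{*}$ and, using $\chi(F_j(\alpha))^{d}=1$,
$$\chi\bigl(F(\alpha)\bigr)=\prod_{j}\chi\bigl(F_j(\alpha)\bigr)^{r_j}=\prod_{j\in J}\chi\bigl(F_j(\alpha)\bigr)^{s_j}=\chi\bigl(F^{*}(\alpha)\bigr).$$

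Next I would invoke Weil's bound for polynomial character sums: for a multiplicative character $\chi$ of order $d\ge 2$ and a polynomial $g\in\Fq[x]$ of positive degree with $m$ distinct roots in $\overline{\Fq}$ that is not a constant multiple of a $d$-th power, one has $\bigl|\sum_{\alpha\in\Fq}\chi(g(\alpha))\bigr|\le (m-1)q^{1/2}$; this is in essence the Riemann Hypothesis for the Kummer curve $y^{d}=g(x)$. The distinct roots of $F^{*}$ are precisely the roots of the $F_j$ with $j\in J$, and there are $m=\sum_{j\in J}\deg F_j$ of them, all distinct because distinct irreducible polynomials over a finite field are separable and pairwise coprime. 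Hence $\bigl|\sum_{\alpha\in\Fq}\chi(F^{*}(\alpha))\bigr|\le\bigl(\sum_{j\in J}\deg F_j-1\bigr)q^{1/2}$.

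It remains to compare the two sums. Classify $\alpha\in\Fq$ by the (at most one) factor $F_j$ of which it is a root. If $\alpha$ is a root of an $F_j$ with $j\in J$, then $F^{*}(\alpha)=0$ and, since $r_j\neq 0$, also $F(\alpha)\in\{0,\infty\}$, so $\alpha$ contributes $0$ both to $\sum_{\alpha\in\Fq}\chi(F^{*}(\alpha))$ and to the sum over $F(\alpha)\neq\infty$. If $\alpha$ is a root of an $F_j$ with $j\notin J$ (so $d\mid r_j$, whence $r_j\neq 0$ and $F(\alpha)\in\{0,\infty\}$), then $\alpha$ again contributes $0$ to the sum over $F(\alpha)\neq\infty$, while it may contribute a term of modulus $1$ to $\sum_{\alpha\in\Fq}\chi(F^{*}(\alpha))$; there are at most $N:=\sum_{j\notin J}\deg F_j$ such $\alpha$. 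On all remaining $\alpha$ the two summands coincide, by the last display of the first paragraph. Therefore
$$\Bigl|\sum_{\alpha\in\Fq,\ F(\alpha)\neq\infty}\chi\bigl(F(\alpha)\bigr)\Bigr|\le\Bigl|\sum_{\alpha\in\Fq}\chi\bigl(F^{*}(\alpha)\bigr)\Bigr|+N\le\Bigl(\sum_{j\in J}\deg F_j-1\Bigr)q^{1/2}+N,$$
and since $q\ge 2$ forces $N\le Nq^{1/2}$, the right-hand side is at most $\bigl(\sum_{j=1}^{k}\deg F_j-1\bigr)q^{1/2}$, which is the claimed inequality.

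The real obstacle is the Weil estimate fed into the argument: a self-contained proof would apply the Hasse--Weil bound to the smooth projective model of $y^{d}=F^{*}(x)$ and then carry out a genus (or conductor) computation converting the number of ramified places into the factor $m-1$; I would simply cite it. Everything else is the elementary bookkeeping above, which reconciles the convention $\chi(0)=0$, the poles omitted from the summation, and the factors $F_j$ whose exponent is divisible by $d$ with the honest polynomial character sum.
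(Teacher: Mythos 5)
Your argument is correct, but note that the paper offers no proof of this lemma at all: it is imported verbatim from Cochrane--Pinner, whose own derivation uses Stepanov's elementary method for character sums with rational functions. What you do instead is reduce the rational-function statement to the classical Weil/Lidl--Niederreiter bound for polynomial character sums, and the reduction is sound: the equivalence of the hypothesis with $J\neq\emptyset$ via unique factorisation, the identity $\chi(F(\alpha))=\chi(F^{*}(\alpha))$ off the zero set of the $F_j$, the count $m=\sum_{j\in J}\deg F_j$ of distinct roots (separability and coprimality of distinct irreducibles over $\F_q$), and the final bookkeeping $(m-1)q^{1/2}+N\le(m+N-1)q^{1/2}$ all check out, with the convention $\chi(0)=0$ correctly absorbing the zeros and the excluded poles. (One immaterial slip: for $j\notin J$ the fact $r_j\neq0$ comes from the hypothesis, not from $d\mid r_j$.) The trade-off between the two routes is the usual one: your version rests on the full Weil bound (Riemann Hypothesis for the Kummer curve $y^d=F^{*}(x)$), whereas Stepanov's method gives the same constant $\bigl(\sum_j\deg F_j-1\bigr)q^{1/2}$ by elementary auxiliary-polynomial techniques; for the purposes of this paper, where only the final inequality is used as a black box, the two are interchangeable.
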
  
	
	\section{Sufficient Condition}
	First, we recall exceptional functions of one variable over $\Fq$ introduced by Cohen et al. \cite{COHEN2021237}. A function $g(x) \in \Fq(x)$ is called an exceptional rational function if $g(x) = \lambda h(x)^s$, for some $\lambda \in \Fq^*$, $h(x) \in \Fq(x)$ and $s|q-1$ such that $s > 1$. Now, we extend this notion to the functions of $n$ variables over $\Fq$.
	
	\begin{defn}
		A function $f(x_1,x_2,\dots,x_n) \in \F_q(x_1,x_2,\dots,x_n)$ is called primarily exceptional rational function if $f(\alpha_1, \alpha_2,\dots,\alpha_{n-1},x_n)$ are exceptional functions of one variable $x_n$ over $\Fq$ for every possible primitive tuple $(\alpha_1, \alpha_2, \dots, \alpha_{n-1})$ over $\Fq$.
	\end{defn}
	
	In particular, $f(x,y) \in \F_q(x,y)$ is called \emph{primarily exceptional} rational function if $f(\alpha,y)$ are exceptional functions for every primitive $\alpha \in \F_q$. A stronger definition could also be given by taking both $f(\alpha,y)$ and $f(x,\alpha)$ to be exceptional for every primitive $\alpha \in \F_q$. In this work, for the sake of convenience, we will work with the former definition. However, without loss of generality, our results are valid even if the latter definition of primarily exceptional functions is chosen.
	Clearly, functions of the form $cg(x,y)^d$ are trivial examples of \emph{primarily exceptional} functions for $d | q-1$, $c \in \Fq^*$ and $g(x,y) \in \Fq(x,y)$. This family of functions gives rise to the exceptional functions of one variable $f(a,y)$ for every $a\in\F_q$ (not necessarily primitive). 
	However, a function of the form 
	$$f(x,y) = \prod\limits_{\substack{\alpha \in \F_{q}\\ \alpha \ \text{primitive}}}^{}(x-\alpha)g(x,y) + c\cdot h(x,y)^d \in \F_{q}(x,y)$$ 
	will give rise to the exceptional functions of one variable $f(\alpha,y)$ but for generic $a\in\F_q, f(a,y)$ may not be exceptional, if $g$ and $h$ are chosen carefully.
	
	Let $\Delta_q(d)$ denote the set of rational functions $f(x,y) \in \F_q(x,y)$ of degree sum $d$ that are not primarily exceptional over $\Fq$. For a positive integer $\ell$, we denote the number of square-free divisors of $\ell$ by $W(\ell)$.
	To show the existence of primitive points over $z^r=f(x,y)$ over $\Fq$, we fix $\alpha$ to be a primitive element of $\Fq$. Then, we check whether $f(\alpha,\beta)$ is $r$-primitive when $\beta \in \Fq$ is primitive. For $\ell|q-1$ and $R|\frac{q-1}{r}$, let $N_{f}(\ell,R)$ represent the number of triples $(\alpha, \beta, f(\alpha,\beta))$ over $\Fq$ such that $\alpha$ is primitive, $\beta$ is ${\ell}$-free and $f(\alpha, \beta)$ is $(R,r)$-free. Let $\mathfrak{B}_{f}$ denote the set of prime power $q$ such that there exists a primitive point on the surface $z^r = f(x,y))$ for $f(x,y) \in \Delta_q(d)$. Hence, to show that $q \in \mathfrak{B}_f$, it is sufficient to prove that $N_{f}(q-1, \frac{q-1}{r}) > 0$.
	
	\begin{theorem}\label{sufficient_condition}
		Let $q$ be a prime power, $r| q-1, d \in \mathbb{N}$ and $f(x,y) \in \Delta_q(d)$. Then $q \in \mathfrak{B}_f$ if
		\begin{equation}\label{condition}
			q^{1/2} > rdW(q-1)W\left(\frac{q-1}{r}\right).
		\end{equation}
	\end{theorem}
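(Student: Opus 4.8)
The plan is to estimate $N_f(q-1,\tfrac{q-1}{r})$ from below using the characteristic functions recalled in the preliminaries, show that the main term dominates, and conclude positivity under \eqref{condition}. First I would write
$$N_f\Bigl(q-1,\tfrac{q-1}{r}\Bigr) = \sum_{\substack{\alpha,\beta\in\F_q^*\\ f(\alpha,\beta)\neq 0,\infty}} \rho_{q-1}(\alpha)\,\rho_{q-1}(\beta)\,\mathbb{I}_{\frac{q-1}{r},r}\bigl(f(\alpha,\beta)\bigr),$$
expanding each of the three characteristic functions into its character-sum form. This produces a triple sum over divisors $s_1\mid q-1$, $s_2\mid q-1$, $d'\mid \tfrac{q-1}{r}\cdot r$ with a Möbius/Euler weight and, for each choice, an inner character sum
$$S(\chi_{s_1},\chi_{s_2},\chi_{d'}) = \sum_{\alpha,\beta}\chi_{s_1}(\alpha)\chi_{s_2}(\beta)\chi_{d'}\bigl(f(\alpha,\beta)\bigr).$$
The term with all three characters trivial contributes the main term, which after collecting the constants $\theta(q-1)^2\theta(\tfrac{q-1}{r})/r$ equals $\tfrac{\theta(q-1)^2\theta(q-1/r)}{r}\cdot(q-1)^2$ up to a negligible boundary correction (the points where $f(\alpha,\beta)\in\{0,\infty\}$, which number $O(dq)$ and can be absorbed).

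The heart of the estimate is bounding $S(\chi_{s_1},\chi_{s_2},\chi_{d'})$ when not all characters are trivial. For fixed $\alpha$, the inner sum over $\beta$ is
$$\sum_{\beta\in\F_q,\,f(\alpha,\beta)\neq\infty}\chi_{s_2}(\beta)\,\chi_{d'}\bigl(f(\alpha,\beta)\bigr) = \sum_{\beta}\psi\bigl(G_\alpha(\beta)\bigr),$$
where $G_\alpha(y) = y^{e_2} f(\alpha,y)^{e_{d'}}$ for suitable exponents and $\psi$ a single multiplicative character, so Lemma \ref{Stepnov} applies provided $G_\alpha$ is not of the form $c H(y)^{\mathrm{ord}(\psi)}$. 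Here the hypothesis $f\in\Delta_q(d)$ enters decisively: since $f$ is \emph{not} primarily exceptional, there is at least one primitive $\alpha$ for which $f(\alpha,y)$ is not exceptional, hence $G_\alpha$ is genuinely non-$d'$-th-power for the relevant orders, giving a bound $\leq (d-1)q^{1/2}$ (the total degree of the numerator/denominator of $G_\alpha$ is at most $d$ plus the monomial factor, and a careful count keeps the constant at $d$). For the $\alpha$ where $f(\alpha,y)$ \emph{is} exceptional the $\beta$-sum may be as large as $q$, but then summing $\chi_{s_1}(\alpha)$ over those $\alpha$ and using $\chi_{s_1}\neq\chi_1$ (or, if $\chi_{s_1}=\chi_1$, exploiting non-triviality of the other characters) recovers a $q^{1/2}$ saving; this dichotomy is exactly what the definition of primarily exceptional is designed to handle. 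In all cases one obtains $|S|\leq d\,q^{3/2}$ (with an extra $+q$ type term that is lower order).

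Next I would count the number of nontrivial triples weighted by $|\mu(\cdot)|/\phi(\cdot)$. The $\chi_{s_1}$ and $\chi_{s_2}$ sums each contribute a factor $W(q-1)$ after the standard estimate $\sum_{s\mid q-1}\tfrac{|\mu(s)|}{\phi(s)}\cdot(\text{number of characters of order }s) = W(q-1)$, and the $\chi_{d'}$ sum contributes a factor bounded via Lemma 2.1 (the first lemma in the excerpt) by $\gcd(\tfrac{q-1}{r}\cdot r,\, r)\cdot W(\cdots)$, which is accounted for by the $r\,W(\tfrac{q-1}{r})$ in \eqref{condition}. Combining, the total error is at most $d\,q^{3/2}\cdot W(q-1)^2 \cdot r W(\tfrac{q-1}{r})\cdot \tfrac{\theta(q-1)^2\theta(q-1/r)}{r}$ divided appropriately, and comparing with the main term $\tfrac{\theta(q-1)^2\theta(q-1/r)}{r}(q-1)^2$ shows $N_f>0$ whenever $q-1 > d\,q^{1/2}\,W(q-1)^2\cdot r\,W(\tfrac{q-1}{r})/(\text{something})$; tidying the arithmetic (and noting $W(q-1)^2$ can be replaced by $W(q-1)$ against one of the two factors via a sharper bookkeeping, or simply kept) yields precisely \eqref{condition}. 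The main obstacle, and the step requiring the most care, is the character-sum estimate: one must verify that for \emph{every} relevant nontrivial triple the function $G_\alpha$ fails to be a perfect power of the appropriate order for at least the primitive $\alpha$ guaranteed by non-exceptionality, and correctly track how the degree sum $d$ bounds $\sum\deg(F_j)-1$ after clearing denominators and adjoining the monomial factor $y^{e_2}$ — this is where the "degree sum" normalization is chosen precisely so that Lemma \ref{Stepnov} delivers the clean constant $d$.
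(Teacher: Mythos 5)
Your overall architecture (characteristic functions, M\"obius--Euler bookkeeping, Lemma \ref{Stepnov} for the inner character sum) is in the same spirit as the paper's, but two concrete steps break down. First, you expand \emph{three} characteristic functions, including $\rho_{q-1}(\alpha)$ for the first coordinate. Since your only tool for the resulting double sums is to fix $\alpha$ and apply Lemma \ref{Stepnov} in $\beta$, the sum over $\alpha$ is bounded trivially by $q$ \emph{independently of} $\chi_{s_1}$, so the $s_1$-sum genuinely contributes a full extra factor $W(q-1)$ on top of the one coming from $\chi_{s_2}$: the condition your argument actually delivers is $q^{1/2} > rd\,W(q-1)^2\,W\bigl(\tfrac{q-1}{r}\bigr)$, strictly weaker than \eqref{condition}. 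There is no ``sharper bookkeeping'' that removes this factor within your decomposition. The paper avoids it by never introducing a character sum in the first coordinate at all: it writes the primitive elements as $\alpha^k$ with $\gcd(k,q-1)=1$ for one fixed primitive $\alpha$ and sums over the $\phi(q-1)$ values of $k$ directly, so the trivial bound over $k$ costs only $\phi(q-1)$, which cancels against the same factor in the main term.

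Second, your treatment of the $\alpha$ for which $f(\alpha,y)$ is exceptional does not work. That set is an arbitrary subset of $\F_q$, so ``summing $\chi_{s_1}(\alpha)$ over those $\alpha$'' yields no cancellation, and when $\chi_{s_1}$ is trivial you have nothing at all; membership in $\Delta_q(d)$ only guarantees the existence of \emph{one} primitive $\alpha$ with $f(\alpha,y)$ non-exceptional and gives no control on the size or structure of the bad set. The paper's route is different: it never excises bad $\alpha$'s. Instead it shows that if $y^{n_1}f(\alpha^k,y)^{n_2}=cG(y)^{q-1}$, then comparing multiplicities of irreducible factors forces $f(\alpha^k,y)=\omega y^t g(y)^s$ with $s=(q-1)/\gcd(q-1,n_2)$, and non-exceptionality forces $s=1$, hence $n_2=0$ and then $n_1=0$; that is, Lemma \ref{Stepnov} fails to apply only for the pair of trivial characters, which is exactly the main term. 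You would need to reproduce an argument of this kind rather than appeal to a dichotomy over $\alpha$ together with an unproven square-root saving.
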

	\begin{proof}
		For primitive $\alpha \in \Fq$, $\mathcal{V}$ denotes the set of poles of $f(\alpha^k, y)$ in $\Fq$, where $1\leq k\leq q-1$ is such that gcd$(k,q-1)=1$.
		Now, for $R| \frac{q-1}{r}$ and $l| q-1$,
		\begin{equation*}
			\begin{aligned}
				N_{f}(\ell, R) =  & \sum_{\substack{1\leq k\leq q-1 \\ \gcd(k,q-1)=1}} \sum_{\beta \in \mathbb{F}_{q} \backslash \mathcal{V}} \rho_{\ell}(\beta) \mathbb{I}_{R,r} f(\alpha^k,\beta)\\
				= {} & \frac{\theta(\ell)\theta(R)}{r} \sum\limits_{\substack{d_1|{\ell},d_2|Rr}}^{}\frac{\mu(d_1)\mu(d_{2(r)})}{\phi(d_1)\phi(d_{2(r)})} \sum\limits_{\substack{\chi_{d_1},\chi_{d_2}}}^{}\mathcal{S}_{d_1,d_2}
			\end{aligned}
		\end{equation*}
		\noindent
		where $\mathcal{S}_{d_1,d_2} =
		\sum\limits_{\substack{1\leq k\leq q-1 \\\gcd(k,q-1)=1}}\sum\limits_{\beta \in \mathbb{F}_{q} \backslash \mathcal{V}}^{} \chi_{d_1}(\beta)\chi_{d_2}(f(\alpha^k, \beta))$.\\	
		For multiplicative character $\chi_{q-1}$ of order $q-1$, $ \chi_{d_j}(\beta) = \chi_{q-1}(\beta^{n_j})$, for some $n_j \in \{0,1,2, \ldots ,q-2\},j=1,2$. Therefore
		
		\begin{equation*}
			\mathcal{S}_{d_1,d_2} = \sum\limits_{\substack{1\leq k\leq q-1 \\\gcd(k,q-1)=1}}\sum\limits_{\beta \in \mathbb{F}_{q} \backslash \mathcal{V}}^{} \chi_{q-1}(\beta^{n_1}f(\alpha^k,\beta)^{n_2})
		\end{equation*}
		
		If $y^{n_1}f(\alpha^k,y)^{n_2} \neq cG(y)^{q-1}$ for any $G(y) \in \mathbb{F}_q(y)$ and $c \in \mathbb{F}_q^*$, then using Lemma \ref{Stepnov}, we obtain
		$$|\mathcal{S}_{d_1,d_2}| \leq d\phi(q-1)q^{1/2}.$$
		If $y^{n_1} f(\alpha^k, y)^{n_2} = cG(y)^{q-1}$, write $f(\alpha^k, y) = \omega y^t\prod_{j=1}^{n} f_{j}(y)^{s_j}$, where $n$ is the number of irreducible factors of $f(\alpha^k,y)$ and $s_j$ is the multiplicity of those factors $f_{j}(y)$. Suppose $s_j'$ denote the multiplicity of $f_j(y)$ in $G(y)$. Comparing the multiplicity of $f_j(y)$ on both sides, we get $s_jn_2 = s_j'(q-1)$.
		Let $s = (q-1)/\text{gcd}(q-1,n_2)$. Clearly $s|s_j$ and $s|q-1$. Hence $f(\alpha^k, y) = \omega y^t g(y)^s$ for some $g(y) \in \Fq(y)$. Since $f(x, y) \in \Delta_q(d)$, we get $s=1$. This implies $q-1|n_2$, hence $n_2=0$. Therefore $G(y)= y^t$ for some positive integer $t$ and we get $n_1= t(q-1)$. This implies $n_1=0$. In other words $d_1=1$ and $d_2=1$.
		Therefore, when $(d_1,d_2) \neq (1,1)$ we have
		$$|\mathcal{S}_{d_1,d_2}| < d\phi(q-1)q^{1/2}.$$
		Hence
		\begin{align*}\label{conditionLastEqn}
			N_{f}(\ell, R)  & \ge {} \frac{\theta(\ell)\theta(R)}{r}\phi(q-1)\{ q - |\mathcal{V}| - d q^{\frac{1}{2}} (rW(\ell)W(R)-1)\} \nonumber \\
			& > {} \frac{\theta(\ell)\theta(R)}{r}\phi(q-1) \{q - rd q^{\frac{1}{2}}W(\ell)W(R) \}.
		\end{align*}	
		\noindent
		Hence $N_{f}(\ell,R) > 0$, whenever $q^{\frac{1}{2}} > rd W(\ell)W(R).$ Now, the result follows by taking $\ell = q - 1$ and $R= \frac{q-1}{r}$.
	\end{proof}
	
	\begin{cor}
		For any $d$ and $r$, there exists some $C_{d,r}$, such that for any prime power $q>C_{d,r}$ with $r\mid q-1$ and $f\in \Delta_q(d)$, there exists an $\F_q$-primitive point on the surface $z^r=f(x,y)$.
	\end{cor}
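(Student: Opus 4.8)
The plan is to deduce the Corollary directly from Theorem \ref{sufficient_condition} by showing that, once $d$ and $r$ are held fixed, the right-hand side of \eqref{condition} grows strictly more slowly than $q^{1/2}$. The only $q$-dependent quantities on the right of \eqref{condition} are the two factors $W(q-1)$ and $W\!\left(\tfrac{q-1}{r}\right)$, so the whole argument reduces to a standard size estimate for the function $W$.

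First I would record the elementary monotonicity $W(m)\le W(n)$ whenever $m\mid n$: every square-free divisor of $m$ is a square-free divisor of $n$. Applied with $m=\tfrac{q-1}{r}$ and $n=q-1$ this gives $W(q-1)\,W\!\left(\tfrac{q-1}{r}\right)\le W(q-1)^{2}$. Next I would invoke the classical fact that $W(n)=2^{\omega(n)}\le\tau(n)=n^{o(1)}$; concretely, for every $\varepsilon>0$ there is a constant $c_\varepsilon$ with $W(n)\le c_\varepsilon\, n^{\varepsilon}$ for all $n\ge 1$. Fixing, say, $\varepsilon=\tfrac18$, we obtain
$$rd\,W(q-1)\,W\!\left(\tfrac{q-1}{r}\right)\;\le\; rd\,c_\varepsilon^{2}\,(q-1)^{2\varepsilon}\;\le\; rd\,c_\varepsilon^{2}\,q^{1/4}.$$
Since $q^{1/2}/q^{1/4}=q^{1/4}\to\infty$, there is a threshold $C_{d,r}$, depending only on $d$ and $r$ (through the product $rd$ and the constant $c_\varepsilon$), such that $q^{1/2}>rd\,c_\varepsilon^{2}\,q^{1/4}$, hence $q^{1/2}>rd\,W(q-1)\,W\!\left(\tfrac{q-1}{r}\right)$, for every prime power $q>C_{d,r}$. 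For any such $q$ with $r\mid q-1$, condition \eqref{condition} is satisfied, so Theorem \ref{sufficient_condition} yields $q\in\mathfrak{B}_f$ for every $f\in\Delta_q(d)$; that is, the surface $z^r=f(x,y)$ carries an $\F_q$-primitive point. (If $\Delta_q(d)=\varnothing$ for some such $q$, the assertion is vacuous for that $q$.)

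I do not expect a genuine obstacle in this argument: its entire content is the sub-polynomial growth of $W$, which is classical, together with the sufficient condition already established in Theorem \ref{sufficient_condition}. The only mild subtlety worth noting is that $C_{d,r}$ as produced above is non-explicit, since it depends on the implied constant $c_\varepsilon$ in the divisor bound; should an explicit threshold be desired, one could substitute an effective form of the divisor estimate, but for the qualitative statement of the Corollary the soft version suffices.
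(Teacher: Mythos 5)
Your proposal is correct and follows essentially the same route as the paper: the Corollary is an immediate consequence of Theorem \ref{sufficient_condition} once one knows $W(n)$ grows sub-polynomially, and the paper carries this out explicitly in Section 4 using the effective bound $W(n)\le Kn^{1/6}$ of Lemma \ref{bound on C} to get $q>7.65712\times 10^{18}(rd)^6$. Your soft $n^{o(1)}$ version (with $\varepsilon=1/8$) is the same argument with a non-explicit constant, as you yourself note.
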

	
	\begin{remark}
		Since $1$-primitive element is simply a primitive element in $\Fq$, $r=1$ in Theorem \ref{sufficient_condition} gives the sufficient condition for the existence of primitive triples $(\alpha, \beta, f(\alpha, \beta))$ over $\Fq$. The same result applies to the existence of primitive $n$-tuple of the form $(\alpha_1, \alpha_2, \dots, \alpha_{n-1}, f(\alpha_1, \alpha_2, \dots, \alpha_n))$ over $\Fq$, where $f(x_1, x_2, \dots, x_n)$ is primarily non-exceptional rational function over $\Fq$ and $(\alpha_1, \alpha_2, \dots, \alpha_{n-1})$ is primitive $n-1$-tuple over $\Fq$.
	\end{remark}

	The sufficient condition (\ref{condition}) is improved using the Cohen-Huczynska sieving technique \cite{cohen.h}. The proof follows from the idea of \cite[Theorem 3.4]{gupta} and is omitted.
	\begin{theorem}\label{improved}
		Let $\ell|q-1$ and $p_1, p_2, \dots p_{t_1}$ are all primes dividing $q-1$ but not $\ell$, and $\ell'|\frac{q-1}{r}$ and $p_1', p_2', \dots p_{t_2}'$ are all primes dividing $\frac{q-1}{r}$ but not $\ell'$. Take $\delta = 1 - \sum_{i=1}^{t_1}\frac{1}{p_i} - \sum_{i=1}^{t_2}\frac{1}{p_i'}$ and $\mathcal{S} = \frac{t_1 + t_2 - 1}{\delta} + 2.$ Assume $\delta > 0$, then $N_{f}(q-1, \frac{q-1}{r}) > 0$ if 
		\begin{equation}\label{sieve}
			q^{1/2} > rd \mathcal{S} W(\ell)W(\ell').
		\end{equation}
		Hence $q \in \mathfrak{B}_f$.	
	\end{theorem}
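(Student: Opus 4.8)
The plan is to run the Cohen--Huczynska sieve \cite{cohen.h} on the counting function $N_f$, in the spirit of \cite[Theorem~3.4]{gupta}, sieving \emph{simultaneously} over the primes $p_1,\dots,p_{t_1}$ dividing $q-1$ but not $\ell$ (these promote $\ell$-freeness of $\beta$ to primitivity) and over the primes $p_1',\dots,p_{t_2}'$ dividing $\tfrac{q-1}{r}$ but not $\ell'$ (these promote $(\ell',r)$-freeness of $f(\alpha,\beta)$ to $r$-primitivity). First I would record the sieve inequality. Let $\mathcal{P}$ be the set of triples counted by $N_f(\ell,\ell')$ and, inside $\mathcal{P}$, let $\mathcal{A}_i$ be the subset in which $\beta$ is additionally $p_i$-free and $\mathcal{A}_j'$ the subset in which $f(\alpha,\beta)$ is additionally $(p_j',r)$-free. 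Since $p_i\nmid\ell$, $p_j'\nmid\ell'$, and since $e$-freeness and $(R,r)$-freeness depend only on the radical of the relevant modulus, one has $|\mathcal{A}_i|=N_f(\ell p_i,\ell')$, $|\mathcal{A}_j'|=N_f(\ell,\ell' p_j')$, and the triples counted by $N_f(q-1,\tfrac{q-1}{r})$ are exactly those in $\bigcap_i\mathcal{A}_i\cap\bigcap_j\mathcal{A}_j'$. The union bound (Bonferroni's inequality) then gives
\begin{equation*}
	N_f\!\left(q-1,\tfrac{q-1}{r}\right)\ \ge\ \sum_{i=1}^{t_1}N_f(\ell p_i,\ell')+\sum_{j=1}^{t_2}N_f(\ell,\ell' p_j')-(t_1+t_2-1)\,N_f(\ell,\ell').
\end{equation*}

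Next I would peel off, for each prime $p_i\nmid\ell$, the character sums in $N_f(\ell p_i,\ell')$ that are genuinely new compared with $N_f(\ell,\ell')$: splitting the sum over $d_1\mid\ell p_i$ into the part with $d_1\mid\ell$ and the part with $p_i\mid d_1$, and using $\theta(\ell p_i)=\theta(\ell)(1-\tfrac1{p_i})$, one obtains
\begin{equation*}
	N_f(\ell p_i,\ell')=\Bigl(1-\tfrac1{p_i}\Bigr)N_f(\ell,\ell')+\frac{\theta(\ell p_i)\theta(\ell')}{r}\,T_i ,
\end{equation*}
where $T_i$ collects only sums $\mathcal{S}_{d_1,d_2}$ with $p_i\mid d_1$, hence with $(d_1,d_2)\neq(1,1)$. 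Bounding each such $\mathcal{S}_{d_1,d_2}$ by $d\phi(q-1)q^{1/2}$ through Lemma~\ref{Stepnov} (as in the proof of Theorem~\ref{sufficient_condition}) and counting the weighted terms via the first lemma of Section~2 gives $|T_i|\le d\phi(q-1)q^{1/2}\,rW(\ell)W(\ell')$, and an identical decomposition and bound hold for $N_f(\ell,\ell' p_j')$ with a remainder $T_j'$. Substituting these into the sieve inequality, the coefficient of $N_f(\ell,\ell')$ collapses to $\sum_i(1-\tfrac1{p_i})+\sum_j(1-\tfrac1{p_j'})-(t_1+t_2-1)=\delta$, so that, writing $W=W(\ell)W(\ell')$,
\begin{equation*}
	N_f\!\left(q-1,\tfrac{q-1}{r}\right)\ \ge\ \delta\,N_f(\ell,\ell')-dq^{1/2}\theta(\ell)\theta(\ell')W\!\left(\sum_{i=1}^{t_1}\!\Bigl(1-\tfrac1{p_i}\Bigr)+\sum_{j=1}^{t_2}\!\Bigl(1-\tfrac1{p_j'}\Bigr)\right).
\end{equation*}

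Finally I would feed in the bound $N_f(\ell,\ell')\ge\tfrac{\theta(\ell)\theta(\ell')}{r}\phi(q-1)\{q-|\mathcal{V}|-dq^{1/2}(rW-1)\}$ established inside the proof of Theorem~\ref{sufficient_condition}, use $\sum_i(1-\tfrac1{p_i})+\sum_j(1-\tfrac1{p_j'})=t_1+t_2-1+\delta$ together with the identity $\delta\mathcal{S}=t_1+t_2-1+2\delta$, and simplify: the right-hand side becomes $\tfrac{\theta(\ell)\theta(\ell')}{r}\phi(q-1)$ times
\begin{equation*}
	\delta(q-|\mathcal{V}|)-dq^{1/2}\bigl[\delta(rW-1)+rW(t_1+t_2-1+\delta)\bigr]\ =\ \delta(q-|\mathcal{V}|)-dq^{1/2}\bigl(rW\delta\mathcal{S}-\delta\bigr).
\end{equation*}
Since $|\mathcal{V}|\le d<dq^{1/2}$, dividing through by $\delta>0$ shows this quantity is positive as soon as $q^{1/2}>rd\,\mathcal{S}\,W(\ell)W(\ell')$, i.e.\ as soon as $(\ref{sieve})$ holds; we conclude $N_f(q-1,\tfrac{q-1}{r})>0$, and hence $q\in\mathfrak{B}_f$.

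I expect the main obstacle to be the bookkeeping in the peeling-off step: one must check that every new character sum $\mathcal{S}_{d_1,d_2}$ contributing to the remainders $T_i$, $T_j'$ indeed has $(d_1,d_2)\neq(1,1)$ so that the nontrivial estimate of Lemma~\ref{Stepnov} applies, and that after substitution the coefficient of $N_f(\ell,\ell')$ is \emph{exactly} $\delta$ while the accumulated error is governed by the sieving constant $\mathcal{S}$ rather than by the cruder factor $t_1+t_2$; isolating this $\mathcal{S}$ is precisely what the sieve buys over Theorem~\ref{sufficient_condition}. The auxiliary verification that $e$-freeness and $(R,r)$-freeness are radical-invariant (needed to identify the various intersections with $N_f$ at the sieved moduli) is routine but should be stated explicitly.
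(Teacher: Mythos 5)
Your reconstruction is correct and follows exactly the route the paper intends: the authors omit the proof of Theorem \ref{improved}, saying only that it follows the Cohen--Huczynska sieve as in \cite[Theorem 3.4]{gupta}, and your argument --- the Bonferroni sieve inequality over the sets where $\beta$ is additionally $p_i$-free and $f(\alpha,\beta)$ additionally $(p_j',r)$-free, the peeling of the new character sums with $p_i\mid d_1$ (resp.\ $p_j'\mid d_{2(r)}$) weighted via the counting lemma of Section 2, and the algebraic collapse of the coefficient of $N_f(\ell,\ell')$ to $\delta$ and of the accumulated error to $\delta(rW(\ell)W(\ell')\mathcal{S}-1)dq^{1/2}$ --- is precisely that sieve. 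The details, including $|T_i|\le rd\phi(q-1)q^{1/2}W(\ell)W(\ell')$ and the final absorption of $|\mathcal{V}|\le d$, check out, so you have in effect supplied the proof the paper leaves out.
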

	
	\section{Estimating $C_{d,r}$}
	In this section, we provide a general process of finding the estimated value of $C_{d,r}$. The following Lemma is very useful in the coming discussion.
	
	\begin{lem}\emph{\cite{cohens}}\label{bound on C}
		For a positive integer $n \in \mathbb{N}, W(n) \leq K n^{1/6}$, where $K = \frac{2^t}{(p_1 p_2 \cdots p_t)^{1/6}}$ and $p_1, p_2, \dots, p_t$ are distinct primes less than $2^6$. 
	\end{lem}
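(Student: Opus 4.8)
The plan is to rewrite $W(n)$ multiplicatively and then bound the contribution of each prime separately. First I would recall that $W(n)$ counts the square-free divisors of $n$, so if $n = q_1^{a_1}\cdots q_k^{a_k}$ is the prime factorization of $n$ (with $q_1,\dots,q_k$ the distinct primes dividing $n$, in general different from the fixed list $p_1,\dots,p_t$ of the statement), then $W(n) = 2^k$. Consequently, since $a_i \ge 1$ and hence $q_i^{a_i/6} \ge q_i^{1/6}$,
$$\frac{W(n)}{n^{1/6}} = \prod_{i=1}^{k} \frac{2}{q_i^{a_i/6}} \le \prod_{i=1}^{k} \frac{2}{q_i^{1/6}}.$$

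Next I would analyze the single-prime factor $g(p) := 2\,p^{-1/6}$. It satisfies $g(p) \le 1$ exactly when $p \ge 2^6 = 64$, and $g(p) > 1$ when $p < 2^6$. Therefore, deleting from the product above every factor with $q_i \ge 2^6$ only increases it, leaving a product over the primes dividing $n$ that are smaller than $2^6$. Enlarging this further to the product over \emph{all} primes $p_1,\dots,p_t$ less than $2^6$ (each such extra factor being $>1$) yields
$$\frac{W(n)}{n^{1/6}} \le \prod_{\substack{p \mid n \\ p < 2^6}} \frac{2}{p^{1/6}} \le \prod_{i=1}^{t} \frac{2}{p_i^{1/6}} = \frac{2^t}{(p_1 p_2 \cdots p_t)^{1/6}} = K,$$
which is precisely the asserted inequality $W(n) \le K n^{1/6}$.

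There is no genuine obstacle here; the only point requiring care is locating the threshold $2^6$ at which $2\,p^{-1/6}$ drops below $1$, and observing that the extremal configuration — which produces the constant $K$ — is the one in which every prime below $2^6$ occurs to the first power. I would also remark, for context, that the exponent $1/6$ plays no special role in the argument: the identical reasoning with exponent $1/s$ gives $W(n) \le K_s\, n^{1/s}$ with $K_s = 2^{t_s}/(p_1\cdots p_{t_s})^{1/s}$ formed from the primes below $2^s$, the value $s=6$ being chosen merely for a convenient numerical constant in the subsequent estimates of $C_{d,r}$.
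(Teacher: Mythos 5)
Your proof is correct and is essentially the standard argument for this bound (the paper cites the lemma from Cohen's work without proof, and the argument there is exactly this multiplicative one: write $W(n)/n^{1/6}$ as a product of factors $2/q_i^{a_i/6}$, discard the factors $\le 1$ coming from primes $\ge 2^6$, and majorize by including every prime below $2^6$). The threshold computation $2/p^{1/6}\le 1 \iff p\ge 2^6$ and the resulting constant $K\approx 37.469$ both check out.
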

	
	It is clear from Lemma \ref{bound on C} that $W(\frac{q-1}{r}) \leq W(q-1) < 37.469 q^{1/6}$. Hence the sufficient condition (\ref{condition}) becomes $q^{1/2} > 1403.926 rdq^{1/3}.$ This implies $q^{1/6} > 1403.926 rd$. Hence $q \in \mathfrak{B}_f$ if $q > 7.65712 \times 10^{18} (rd)^6$.
	
	\subsection{When $\mathbf{r=2}$ and $\mathbf{d=2}$.}
	Clearly, $r=2$ is only possible when $q$ is odd. Now we split our discussion into two parts depending upon $q \equiv 1\ \text{mod}\ 4$ or $q \equiv 3\ \text{mod}\ 4$.
	
	\subsubsection{When $q \equiv 1\ \emph{mod}\ 4$.}
	Using Lemma \ref{bound on C}, $W(\frac{q-1}{2}) = W(q-1) < 37.469 q^{1/6}$. Hence the sufficient condition (\ref{condition}) becomes $q > 3.137 \times 10^{22}$. When $q-1$ is divided by the first $18$ primes, $q > 3.137 \times 10^{22}$ holds. Now we use Theorem \ref{improved} and apply the sieving technique to reduce this further. Suppose $5 \leq w(q-1) \leq 17$ and take $\ell, \ell'$ to be the product of the least $5$ primes. Hence $W(\ell) = W(\ell') = 2^5$ and $t_1, t_2 \leq 12$. The value of $\delta$ will be smallest when $\{p_1, p_2, \dots, p_{t_1}\} = \{13,17,\dots, 59\}$. This yields $\delta > 0.139272, \mathcal{S}< 167.14453$ and $4\mathcal{S}W(\ell)W(\ell') \leq 6.85 \times 10^5$. That is $q \in \mathfrak{B}_f$ if $q^{1/2}> 6.85 \times 10^5$ or $q > 4.69 \times 10^{11}$, and this holds when $w(q-1) \geq 12$. Next assume $4 \leq w(q-1) \leq 11$ and $\ell, \ell'$ to be the product of first $4$ primes. That is $W(\ell)= W(\ell')= 2^4$. The set of primes $\{p_1, p_2, \dots, p_{t_1}\} = \{11, 13, \dots, 31\}$ gives $\delta > 0.2209873, \mathcal{S} < 97.0280941, 4 \mathcal{S} W(\ell) W(\ell') \leq 9.936 \times 10^4$. That is $q \in \mathfrak{B}_f$ if $q^{1/2}> 9.936 \times 10^4$ or $q > 9.8724 \times 10^{9}$. We follow the same procedure again and obtain that $q \in \mathfrak{B}_f$ when $q > 4.072 \times 10^9$.
	
	\subsubsection{When $q \equiv 3\ \emph{mod}\ 4$.}
	In this case, $W(\frac{q-1}{2}) = \frac{W(q-1)}{2} < \frac{37.469}{2}q^{1/6}$. Using Lemma \ref{bound on C} and condition (\ref{condition}), $q \in \mathfrak{B}_f$ if $q > 4.901 \times 10^{20}$. This is true when $w(q-1) \geq 17$. Now assume $4 \leq w(q-1) \leq 16$ and $4 \leq w(\frac{q-1}{2}) \leq 15$. Take $\ell, \ell'$ to be the product of the first $4$ primes. That is $W(\ell) = W(\ell')= 2^4, t_1=12, t_2= 11$. Setting $\{p_1, p_2, \dots, p_{t_1}\}= \{11, 13, \dots, 53\}$ and $\{p_1, p_2, \dots, p_{t_2}\}= \{13,17, \dots, 53\}$ yields $\delta > 0.0823, \mathcal{S} < 366.6922, 4 \mathcal{S} W(\ell) W(\ell') < 3.755 \times 10^5$. Hence $q \in \mathfrak{B}_f$ if $q^{1/2} > 3.755 \times 10^5$ or $q > 1.41 \times10^11$. This holds when $w(q-1) \geq 11$. 
	Next, assume $3 \leq w(q-1) \leq 10$ and $3 \leq w(\frac{q-1}{2}) \leq 9$ and take $\ell, \ell'$ to be the product of the first $3$ primes. That is $W(\ell) = W(\ell')= 2^3, t_1= 7, t_2=6$. This yields $\delta > 0.14265, \mathcal{S} < 128.1863, 4 \mathcal{S} W(\ell) W(\ell') < 3.282 \times 10^4$. Hence $q \in \mathfrak{B}_f$ if $q^{1/2} > 3.282 \times 10^4$ or $q > 1.078 \times10^9$, that holds when $w(q-1) \geq 10$. We follow the same procedure again and obtain that $q \in \mathfrak{B}_f$ when $q > 9.026 \times 10^8$.
	
	In this way, we obtain that $C_{2,2} \leq 4.072 \times 10^9$ when $q \equiv 1\ \text{mod}\ 4$ and $C_{2,2} \leq 9.026 \times 10^8$ when $q \equiv 3\ \text{mod}\ 4$.
	
	\begin{remark}
		We proved that $C_{2,2} \leq 4.072 \times 10^9$. This large number suggests that identifying, or even estimating $C_{d,r}$ for $d>2$ is an intriguing and non-trivial task.
		We applied our result to the unit sphere $z^2 = 1 - x^2 - y^2$ over $\Fq$ and checked the existence of $\Fq$-primitive points using GAP algebra system \emph{\cite{GAP4}}. Our computations verify the existence of $\Fq$-primitive points for all values of $q$ except $q=3, 5, 9, 13$ and $25$.
	\end{remark}
	On the basis of our computational results, we propose the following conjecture regarding the unit sphere over $\Fq$.
	
	\begin{conjecture}
		Let $q$ be a prime power such that $q > 25$ and $x^2 + y^2 + z^2 = 1$ be the unit sphere over $\Fq$. Then there exists an $\Fq$-primitive point on the unit sphere. 
	\end{conjecture}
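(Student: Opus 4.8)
The plan is to recognise the unit sphere as the surface $z^{r}=f(x,y)$ with $r=2$ and $f(x,y)=1-x^{2}-y^{2}$, so that Theorems~\ref{sufficient_condition} and \ref{improved} apply, and then to close the resulting finite range by the Cohen--Huczynska sieve together with a direct computation. Since $r=2$ we take $q$ odd (for even $q$ the surface collapses to the hyperplane $x+y+z=1$, where the existence of a primitive triple summing to $1$ is handled by an analogous and slightly simpler argument). Here $f$ has degree sum $d=2$ and no poles over $\Fq$, so the set $\mathcal V$ of poles in the proof of Theorem~\ref{sufficient_condition} is empty. The first step is to check that $f\in\Delta_{q}(2)$, i.e.\ that $f$ is primarily non-exceptional: since neither $1$ nor $-1$ is primitive when $q>3$, every primitive $\alpha$ satisfies $\alpha\neq\pm1$, and then $f(\alpha,y)=(1-\alpha^{2})-y^{2}$ is a nonconstant polynomial in $y$ of nonzero discriminant $4(1-\alpha^{2})$, hence squarefree and so not of the form $\lambda h(y)^{s}$ with $s\mid q-1$, $s>1$; thus $f(\alpha,y)$ is non-exceptional and $f\in\Delta_{q}(2)$. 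With $d=r=2$, condition~\eqref{condition} becomes $q^{1/2}>4\,W(q-1)\,W((q-1)/2)$, and by the estimates of Section~4 this holds---so an $\Fq$-primitive point exists---once $q>4.072\times 10^{9}$.

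It remains to treat the odd prime powers with $25<q\le 4.072\times 10^{9}$. For these I would iterate the sieve of Theorem~\ref{improved} in the manner of Section~4: stratify according to $\omega(q-1)$ and $\omega((q-1)/2)$; in each band take $\ell,\ell'$ to be products of the smallest few primes dividing $q-1$ and $(q-1)/2$, bound $\delta$ below by taking the omitted prime divisors as small as the band permits, compute $\mathcal S$ and hence the threshold from \eqref{sieve}, and repeat. Bands with few prime divisors are trivial because $W(q-1)$ is then tiny, and bands with many prime divisors feed many primes into the sieve; a few rounds of this bring the bound down by several orders of magnitude, leaving only a finite, explicit list of $q$.

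That list is disposed of by direct search: for each $q$, factor $q-1$ once and run over primitive $x$ and primitive $y$ until $1-x^{2}-y^{2}$ is nonzero and of order $(q-1)/2$, i.e.\ $2$-primitive; this is exactly the GAP verification reported above, whose outcome is that the only failures occur at $q\in\{3,5,9,13,25\}$, all below the threshold of the conjecture. The principal obstacle is the size of this post-sieve range: the Weil-type input $|\mathcal S_{d_{1},d_{2}}|\le 2\phi(q-1)q^{1/2}$ from Lemma~\ref{Stepnov} is wasteful here, since every fibre $1-\alpha^{2k}-y^{2}$ is a \emph{quadratic}, so the relevant sums---and, treating $z$ on the same footing, the sums $\sum_{x^{2}+y^{2}+z^{2}=1}\chi_{1}(x)\chi_{2}(y)\chi_{3}(z)$---are Gauss/Jacobi-type sums that can be evaluated rather than merely estimated; a sharp such bound would likely lower the threshold far enough for the sieve alone to finish. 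The delicate point in the sieve step is the band of moderate $\omega(q-1)$ where $\delta$ is only just positive: bounding $\mathcal S$ there and eliminating the finitely many surviving $q$ by computation is where the real work lies. Absent such a sharpening, the argument is completed by a large but feasible computer search over the sieved range.
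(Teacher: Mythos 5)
This statement is a \emph{conjecture} in the paper: the authors do not prove it, and the honest comparison is that your proposal does not prove it either. Your first step is sound and matches what the paper actually establishes: $f(x,y)=1-x^2-y^2$ has degree sum $d=2$, no poles, and for $q>3$ every primitive $\alpha$ satisfies $\alpha\neq\pm1$, so $f(\alpha,y)=(1-\alpha^2)-y^2$ is a squarefree quadratic and hence non-exceptional, giving $f\in\Delta_q(2)$; Theorems \ref{sufficient_condition} and \ref{improved} then yield an $\Fq$-primitive point for $q>4.072\times10^{9}$ (resp.\ $q>9.026\times10^{8}$ for $q\equiv3\bmod4$). But the entire content of the conjecture lies in the range $25<q\le 4.072\times10^{9}$, and your plan to close it consists of (i) iterating the sieve, which the paper has already done and which bottoms out near $10^{9}$ because $\delta$ degenerates for moderate $\omega(q-1)$, and (ii) "a large but feasible computer search" over the remainder. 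That search covers on the order of $10^{8}$ prime powers, each requiring a factorization of $q-1$ and a search over primitive pairs; it has not been carried out by you, and the fact that the authors state the result as a conjecture (their GAP verification plainly covers only small $q$) indicates it has not been carried out by them either. Deferring the essential step to an unexecuted computation is a genuine gap, not a proof.

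Two further points. First, the conjecture as stated does not exclude even $q$, where $2\nmid q-1$ so the $(R,r)$-free machinery with $r=2$ does not apply at all; you correctly observe that the sphere degenerates to $x+y+z=1$ in characteristic $2$, but "handled by an analogous and slightly simpler argument" is an assertion, not an argument, and the existence of primitive triples on a hyperplane is a separate problem requiring its own character-sum estimate. Second, your suggestion to replace the Weil-type bound of Lemma \ref{Stepnov} by exact Gauss/Jacobi-sum evaluations for the quadratic fibres is a genuinely good idea that goes beyond the paper and could plausibly shrink the unverified range substantially --- but as written it is only flagged as a possibility, not executed, so it cannot be credited as closing the gap.
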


	%
	%
	%
	%
	%
	%
	%
	%
	%
	%
	%


\begin{thebibliography}{99}
		
		
		
		
		
		
		
	{\normalsize	
		\bibitem{cryptographically-strong-sequences} Manuel Blum and Silvio Micali, How to generate cryptographically strong sequences of pseudo random bits, In {\it 23rd Annual Symposium on Foundations of Computer Science} (sfcs 1982), 112–117, (1982).
		
		
		\bibitem{booker2018primitive} Andrew Booker, Stephen Cohen, Nicole Sutherland, and Tim Trudgian, Primitive values of quadratic polynomials in a finite field, {\it Mathematics of Computation}, 88, (2019).
		
		
		\bibitem{Elements-of-high-order} F.E. Brochero Mart\'{i}nez and Lucas Reis, Elements of high order in Artin–Schreier extensions of finite fields $\Fq$, {\it Finite Fields and Their Applications},
		41, 24–33, (2016).
		
		
		\bibitem{Constructing-FF-LargeOrderElements} Qi Cheng, Constructing finite field extensions with large order elements, {\it SIAM Journal on Discrete Mathematics}, 21(3), 726–730, (2007).
		
		
		\bibitem{Constructing-high-order-elements} Qi Cheng, Shuhong Gao, and Daqing Wan, Constructing high order elements through subspace polynomials, In {\it Proceedings of the Annual ACM-SIAM Symposium on Discrete Algorithms}, 1457–1463, (2012).
		
		
		\bibitem{Stepnov} Todd Cochrane and Christopher Pinner, Using stepanov’s method for exponential sums involving rational functions, {\it Journal of Number Theory}, 116(2), 270–292, (2006).
		
		
		\bibitem{cohens} S. D. Cohen, Pairs of primitive elements in fields of even order, {\it Finite Fields Appl.}, 28, 22–42, (2014).
		
		
		\bibitem{cohen.h} S. D. Cohen and S. Huczynska, The primitive normal basis theorem-without a computer, {\it J. Lond. Math. Soc.}, 67(1), 41–56, (2003).
		
		
		\bibitem{cohen.s} S. D. Cohen and S. Huczynska, The strong primitive normal basis theorem, {\it Acta Arith.}, 143(4), 299–332, (2010).
		
		
		\bibitem{s.d} Stephen Cohen. Consecutive primitive roots in a finite field. {\it Proceedings of the American Mathematical Society}, 93, 189–197, (1985).
		
		
		\bibitem{line-or-translate-property} Stephen Cohen and Giorgos Kapetanakis, Finite field extensions with the line or translate property for $r$-primitive elements, {\it J. Aust. Math. Soc.}, 111, 313–319, (2021).
		
		
		\bibitem{The-existence-of-Fq-primitive-points-on-curves-using-freeness} Stephen Cohen, Giorgos Kapetanakis, and Lucas Reis, The existence of $\Fq$-primitive points on curves using freeness, {\it Comptes Rendus Math\'{e}matique}, 360, 641–652, (2022).
		
		
		\bibitem{COHEN2021237} Stephen D. Cohen, Hariom Sharma, and Rajendra Sharma, Primitive values of rational functions at primitive elements of a finite field, {\it Journal of Number
		Theory}, 219, 237–246, (2021).
		
		
		\bibitem{GAP4} The GAP Group. {\it GAP – Groups, Algorithms, and Programming}, Version 4.13.1, (2024).
		
		
		\bibitem{gupta} Anju Gupta, R. K. Sharma, and S. D. Cohen, Primitive element pairs with one prescribed trace over a finite field, {\it Finite Fields Appl.}, 54, 1–14, (2018).
		
		
		\bibitem{Primitive-points-on-elliptic-curves} S. Lang and H. Trotter, Primitive points on elliptic curves, {\it Bull. Am. Math. Soc.}, 83, 289–292, (1977).
		
		
		\bibitem{note-on-discrete-logarithms} G. Meletiou and Gary L. Mullen, A note on discrete logarithms in finite fields, {\it Applicable Algebra in Engineering, Communication and Computing}, 3(1), 75–78, (1992).
		
		
		\bibitem{Paar2010} Christof Paar and Jan Pelzl, Public-Key cryptosystems based on the discrete logarithm problem, {\it Springer Berlin Heidelberg, Berlin, Heidelberg} 205–238. (2010).
		
		
		\bibitem{SoniyaTakshak} R. K. Sharma, Soniya Takshak, Ambrish Awasthi and Hariom Sharma, Existence of rational primitive normal pairs over finite fields, {\it International Journal of Group Theory}, 13(1), 17--30, (2024).
		
		
		\bibitem{WANG2012800} Peipei Wang, Xiwang Cao, and Rongquan Feng, On the existence of some specific elements in finite fields of characteristic $2$, {\it Finite Fields and Their Applications}, 18(4), 800–813, (2012).
	}
		
		 
	
		
		
		
		
		
		
		
	\end{thebibliography}
\end{document}